\begin{document}

 \bibliographystyle{plain}
 \newtheorem{theorem}{Theorem}
 \newtheorem{lemma}[theorem]{Lemma}
 \newtheorem{corollary}[theorem]{Corollary}
 \newtheorem{problem}[theorem]{Problem}
 \newtheorem{conjecture}[theorem]{Conjecture}
 \newtheorem{definition}[theorem]{Definition}
 \newtheorem{prop}[theorem]{Proposition}
 \numberwithin{equation}{section}
 \numberwithin{theorem}{section}

 \newcommand{\mo}{~\mathrm{mod}~}
 \newcommand{\mc}{\mathcal}
 \newcommand{\rar}{\rightarrow}
 \newcommand{\Rar}{\Rightarrow}
 \newcommand{\lar}{\leftarrow}
 \newcommand{\lrar}{\leftrightarrow}
 \newcommand{\Lrar}{\Leftrightarrow}
 \newcommand{\zpz}{\mathbb{Z}/p\mathbb{Z}}
 \newcommand{\mbb}{\mathbb}
 \newcommand{\B}{\mc{B}}
 \newcommand{\cc}{\mc{C}}
 \newcommand{\D}{\mc{D}}
 \newcommand{\E}{\mc{E}}
 \newcommand{\F}{\mathbb{F}}
 \newcommand{\G}{\mc{G}}
  \newcommand{\ZG}{\Z (G)}
 \newcommand{\FN}{\F_n}
 \newcommand{\I}{\mc{I}}
 \newcommand{\J}{\mc{J}}
 \newcommand{\M}{\mc{M}}
 \newcommand{\nn}{\mc{N}}
 \newcommand{\qq}{\mc{Q}}
 \newcommand{\PP}{\mc{P}}
 \newcommand{\U}{\mc{U}}
 \newcommand{\X}{\mc{X}}
 \newcommand{\Y}{\mc{Y}}
 \newcommand{\itQ}{\mc{Q}}
 \newcommand{\sgn}{\mathrm{sgn}}
 \newcommand{\C}{\mathbb{C}}
 \newcommand{\R}{\mathbb{R}}
 \newcommand{\T}{\mathbb{T}}
 \newcommand{\N}{\mathbb{N}}
 \newcommand{\Q}{\mathbb{Q}}
 \newcommand{\Z}{\mathbb{Z}}
 \newcommand{\A}{\mathcal{A}}
 \newcommand{\ff}{\mathfrak F}
 \newcommand{\fb}{f_{\beta}}
 \newcommand{\fg}{f_{\gamma}}
 \newcommand{\gb}{g_{\beta}}
 \newcommand{\vphi}{\varphi}
 \newcommand{\whXq}{\widehat{X}_q(0)}
 \newcommand{\Xnn}{g_{n,N}}
 \newcommand{\lf}{\left\lfloor}
 \newcommand{\rf}{\right\rfloor}
 \newcommand{\lQx}{L_Q(x)}
 \newcommand{\lQQ}{\frac{\lQx}{Q}}
 \newcommand{\rQx}{R_Q(x)}
 \newcommand{\rQQ}{\frac{\rQx}{Q}}
 \newcommand{\elQ}{\ell_Q(\alpha )}
 \newcommand{\oa}{\overline{a}}
 \newcommand{\oI}{\overline{I}}
 \newcommand{\dx}{\text{\rm d}x}
 \newcommand{\dy}{\text{\rm d}y}
\newcommand{\diam}{\operatorname{diam}}
\newcommand{\bx}{\mathbf{x}}
\newcommand{\Ps}{\varphi}

\parskip=0.5ex

\title[Bounded remainder sets and cut-and-project sets II]{Constructing bounded remainder sets and cut-and-project sets which are bounded distance to lattices, II}
\author{Alan~Haynes,
Michael~Kelly,
Henna~Koivusalo}
\thanks{AH, HK: Research supported by EPSRC grants L001462, J00149X, M023540.\\
\phantom{K~}MK: Research supported NSF grants DMS-1101326, 1045119 and 0943832.\\
\phantom{K~}HK: Research supported by Osk. Huttunen Foundation.}

\allowdisplaybreaks

\maketitle

\begin{abstract}
Recent results of several authors have led to constructions of parallelotopes which are bounded remainder sets for totally irrational toral rotations. In this brief note we explain, in retrospect, how some of these results can easily be obtained from a geometric argument which was previously employed by Duneau and Oguey in the study of deformation properties of mathematical models for quasicrystals.
\end{abstract}

\section{Introduction}
Let $T:\T^s\rar\T^s$ be a Lebesgue measure preserving transformation of the $s$-dimensional torus $\T^s=\R^s/\Z^s$. A measurable set $A\subseteq\R^s$ is a {\it bounded remainder set} (henceforth denoted BRS) for $T$ if
\[
\sup_{x\in\R^s}\sup_{N\in\N}\left|\sum_{n=0}^{N-1}\chi_A(T^n(x)) - N|A|\right|<\infty,
\]
where $\chi_A:\T\rar\Z_{\ge 0}$ is the indicator function of $A$, viewed as a multi-set in $\T^s$ (i.e. so that $\chi_A$ could potentially take any non-negative integer value), and $|A|$ denotes its measure. Throughout this paper we will assume that $T$ is a toral rotation, given by $T(x)=x+\alpha$ for some $\alpha\in\R^s$. This is a situation which is particularly important in Diophantine approximation, and over the course of the past century it has been studied by a number of authors.

For $s=1$ the problem of classifying BRS's is satisfactorily dealt with by works of Hecke \cite{Heck1922}, Ostrowski \cite{Ostr1927/30}, and Kesten \cite{Kest1966/67} (see also related results of Oren \cite{Oren1982}), which together show that for an irrational rotation of $\T$ by $\alpha$, a necessary and sufficient condition for an interval $\mc{I}$ to be a BRS is that $|\mc{I}|\in\alpha\Z+\Z.$ Several papers \cite{Liar1987,Szus1954,Zhur2005,Zhur2011,Zhur2012} have investigated the corresponding problems in higher dimensions. Of particular note are the works of Sz\"{u}sz \cite{Szus1954}, who demonstrated a construction of parallelogram BRS's when $s=2$, and Liardet \cite[Theorem 4]{Liar1987}, who used a dynamical cocycles argument to extend Sz\"{u}sz's construction to arbitrary $s>1$. Other significant connections between BRS's and dynamical systems have been highlighted in \cite{GrabHellLiar2012, HaynKoiv2016,KellSadu2015,Rauz1972}, and a more thorough exposition of the history of these sets can be found in the introduction of \cite{GrepLev2015}.

Our understanding of polytope BRS's in higher dimensions has recently been substantially improved by Grepstad and Lev in \cite{GrepLev2015}. One of their central results is the following theorem.
\begin{theorem}\label{thm.BRSs}
Suppose that $\alpha\in\R^s$ and that $1,\alpha_1,\ldots ,$ and $\alpha_s$ are linearly independent over $\Q$. Then for any choice of linearly independent vectors $v_1,\ldots ,v_s\in\alpha\Z+\Z^s$, the parallelotope
\begin{equation}\label{eqn.Parallelotope}
P=\left\{\sum_{j=1}^st_jv_j:0\le t_j<1\right\}
\end{equation}
is a BRS for the rotation of $\T^s$ by $\alpha$.
\end{theorem}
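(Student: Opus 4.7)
The plan is to lift the orbit sum to a lattice-point counting problem in $\R^{s+1}$ and then apply a geometric deformation argument in the spirit of Duneau and Oguey. First, introduce
\[
L := \{(n,\, n\alpha + m) : n \in \Z,\ m \in \Z^s\} \subset \R^{s+1},
\]
a lattice of covolume one. The bijection $(n,m) \mapsto (n, n\alpha + m)$ between $\Z^{s+1}$ and $L$ rewrites the orbit sum as a lattice-point count,
\[
\sum_{n=0}^{N-1} \chi_P(n\alpha) = \#(L \cap C_N), \qquad C_N := [0, N) \times P,
\]
so it suffices to prove $\#(L \cap C_N) = N|P| + O(1)$.

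Second, exploit the hypothesis $v_j \in \alpha\Z + \Z^s$: writing $v_j = n_j \alpha + m_j$, each lift $w_j := (n_j, v_j)$ lies in $L$. Choose $k_N \in \Z^s$ minimizing $|N\alpha + k_N|$ (bounded uniformly in $N$), and set $c_N := N\alpha + k_N$, so $(N, c_N) \in L$. Let $\Pi_N$ be the parallelepiped in $\R^{s+1}$ generated by $(N, c_N), w_1, \ldots, w_s$. Since all its edges are lattice vectors, $\Pi_N$ is a fundamental domain of the sublattice $L' := \Z \cdot (N, c_N) + \sum_j \Z \cdot w_j \subset L$, and therefore
\[
\#(L \cap \Pi_N) = [L : L'] = \mathrm{vol}(\Pi_N).
\]
A cofactor expansion of the generating determinant along the first column gives $\mathrm{vol}(\Pi_N) = N|\det(v_1,\ldots,v_s)| + O(1) = N|P| + O(1)$, the $O(1)$ absorbing fixed $(s-1) \times s$ minors multiplied by the bounded entries of $c_N$.

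Third, and this is the heart of the argument, I would show $\#(L \cap C_N) - \#(L \cap \Pi_N) = O(1)$. Parameterizing both parallelepipeds by $t = (t_0, t_1, \ldots, t_s) \in [0, 1)^{s+1}$, the point of $\Pi_N$ at parameter $t$ differs from that of $C_N$ at the same $t$ by the bounded displacement $\bigl(\sum_j t_j n_j,\ t_0 c_N\bigr)$. Hence $C_N$ and $\Pi_N$ lie within bounded Hausdorff distance, and their symmetric difference decomposes into a fixed number of wedge-shaped regions along the boundary facets. The key Duneau--Oguey observation is that each wedge in $C_N \setminus \Pi_N$ can be matched, via translation by an element of $L'$, with a corresponding wedge in $\Pi_N \setminus C_N$ having equal $L$-count, leaving only bounded corner remainders.

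The main obstacle is rigorously justifying this wedge-matching. A single wedge can contain $\Theta(N)$ lattice points of $L$, so the cancellation between $C_N \setminus \Pi_N$ and $\Pi_N \setminus C_N$ is far from automatic and must be essentially exact. The plan is to identify the correct $L'$-translation for each wedge and to control the bounded corner residuals using the $\Q$-linear independence of $1, \alpha_1, \ldots, \alpha_s$, which rules out unexpected lattice incidences on the shared boundary hyperplanes. Combining the preceding steps then gives $\#(L \cap C_N) = N|P| + O(1)$, precisely the BRS property for $P$.
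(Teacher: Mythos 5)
Your setup is sound: the lift of the orbit sum to the count $\#(L\cap C_N)$ with $L=\{(n,n\alpha+m)\}$, the observation that the hypothesis $v_j\in\alpha\Z+\Z^s$ produces lattice vectors $w_j=(n_j,v_j)\in L$, and the computation $\#(L\cap \Pi_N)=[L:L']=\mathrm{vol}(\Pi_N)=N|P|+O(1)$ are all correct (for $N$ large enough that the generators of $L'$ are independent). But the third step, which you yourself flag as the heart of the argument, is a genuine gap, and it is exactly where the content of the theorem lives. Bounded Hausdorff distance between $C_N$ and $\Pi_N$ only places $C_N\triangle\Pi_N$ inside a bounded-thickness neighborhood of a boundary of surface area $\Theta(N)$, so each side of the symmetric difference can contain $\Theta(N)$ points of $L$. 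The claim that "each wedge in $C_N\setminus\Pi_N$ can be matched, via translation by an element of $L'$, with a corresponding wedge in $\Pi_N\setminus C_N$ having equal $L$-count" is asserted, not proved; the wedges are not in general $L'$-translates of one another (they are sheared differently along the long direction), and proving that two such regions have lattice counts agreeing to $O(1)$ is essentially the original problem restated. The appeal to $\Q$-linear independence of $1,\alpha_1,\ldots,\alpha_s$ to control "corner residuals" does not address this: the difficulty is not degenerate incidences on shared hyperplanes but the exact cancellation of linearly many points. (A secondary issue: the BRS definition requires a bound uniform in the starting point $x$, so you must run the argument for $[0,N)\times(P-x)$ and check the constants are independent of $x$; this is routine once the main step is fixed, but it is missing.)

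The way to close the gap -- and the route the paper takes -- is to replace the comparison of two bounded regions by a group-theoretic decomposition of the \emph{infinite} strip. Work with $\tilde Y=\rho_i^{-1}(P-x)\cap\Z^{s+1}$, where $\rho_i$ is the projection onto $V_i=\R^s\times\{0\}$ along $V_p=\langle(\alpha,1)\rangle_\R$. Let $Z$ be the real span of your $w_1,\ldots,w_s$, put $\Lambda=Z\cap\Z^{s+1}$ and $\Lambda'=\langle w_1,\ldots,w_s\rangle_\Z\leqslant\Lambda$ of some finite index $N_0$, and choose a rank-one complement $\Lambda_c$ with $\Z^{s+1}=\Lambda+\Lambda_c$. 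Then $\tilde Y=\bigsqcup_{\lambda\in\Lambda_c}(Z+\lambda)\cap\tilde Y$, and each slice $(Z+\lambda)\cap\rho_i^{-1}(P-x)$ is a translate of a fundamental domain for $Z/\Lambda'$, hence contains exactly $N_0$ points of the lattice -- no cancellation between large regions is ever needed. This yields a bounded-displacement bijection from the return-time set $Y'=\{n:n\alpha\in P-x\bmod\Z^s\}$ (with multiplicity) to an arithmetic progression $\gamma\Z$, with $\gamma=|P|^{-1}$ by equidistribution, and the estimate $\sum_{n=1}^{N}\chi_P(n\alpha+x)=|P|N+O(1)$ then follows by comparing consecutive return times. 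I would recommend reorganizing your step 3 around this coset decomposition rather than attempting to repair the wedge-matching directly.
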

Grepstad and Lev's proof of Theorem \ref{thm.BRSs} involved a detailed Fourier analytic argument which allowed them to calculate the `transfer functions' of the parallelotopes $P$. One of the goals of this paper is to explain how the theorem can be deduced using a simple and elegant geometric argument, first introduced by Duneau and Oguey \cite{DuneOgue90} in the study of regularity properties of quasicrystalline materials.  When applied to the BRS problem, the essence of their argument shows that the collection of return times of a toral rotation to a region of the form \eqref{eqn.Parallelotope} has a group structure coming from a natural higher dimensional realization of the problem.

Our main theorem, which is an abstract but otherwise straightforward generalization of \cite[Theorem 3.1]{DuneOgue90}, is Theorem \ref{thm.AbstractMain} below. Upon first reading, the statement of the theorem may seem rather complicated. Our reason for presenting it in such a form is to highlight its flexibility in terms of potential applications. The theorem applies not only to orbits of toral rotations, which can be recast as one dimensional quasicrystals, but also to multidimensional point patterns in Euclidean space. As a particular example, the following result will be shown to be an easy consequence of our main result.
\begin{theorem}\label{thm.BD}
Let $Y\subseteq\R^2$ be the collection of vertices of any Penrose tiling. Then there is a bijection from $Y$ to a lattice, which moves every point by at most a uniformly finite amount.
\end{theorem}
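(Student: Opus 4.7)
The plan is to view $Y$ as a cut-and-project set and then apply Theorem \ref{thm.AbstractMain} to a suitable window decomposition.

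First, I would invoke the standard realisation of Penrose vertices as a model set: $Y$ arises (up to translation) as the projection to a $2$-plane $E \subset \R^5$ of those points of $\Z^5$ whose projection to the complementary internal space $E^\perp$ lies in a window $W$, where $W$ is a union of four regular pentagons sitting at four specific heights in a one-dimensional sub-direction of $E^\perp$. This is the classical $5$-grid construction (equivalently, the $A_4$ root-lattice construction after quotienting the diagonal direction).

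Since $W$ is not itself a parallelotope, the next step is to decompose each pentagon into a finite disjoint union of parallelograms whose edges are internal-space projections of standard basis vectors of $\Z^5$. This produces a partition $W = W_1 \sqcup \cdots \sqcup W_k$ by parallelotope pieces, and correspondingly a partition $Y = Y_1 \sqcup \cdots \sqcup Y_k$ into sub-model-sets, where each $Y_i$ is the cut-and-project set determined by $W_i$. The feasibility of such a rhombus decomposition of the pentagonal pieces, with edges precisely of the required star-image type, is the geometric input here.

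I would then apply Theorem \ref{thm.AbstractMain} to each $W_i$. Since each $W_i$ is a parallelotope whose edges are star images of lattice vectors in $\Z^5$, the theorem produces a bounded-distance bijection from $Y_i$ to a coset $L_i + t_i$ of a lattice $L_i \subset E$. Because $\sum_i |W_i| = |W|$, the densities of these cosets add up to the density of $Y$; by choosing the $L_i$ all to be cosets of a common sublattice $L_0 \subset E$ of density $|W|$, and the translates $t_i$ compatibly, one can arrange $\bigsqcup_i(L_i + t_i)$ to be a single lattice $L \subset E$ (in fact a translate of $L_0$'s dual refinement). Gluing the individual bijections $Y_i \to L_i + t_i$ then yields the required bounded-distance bijection $Y \to L$.

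The main obstacle is the window-decomposition step: producing an explicit parallelogram subdivision of the pentagonal slabs in which every rhombus edge is the internal-space projection of a lattice vector. Once this is in place, the reassembly step is essentially combinatorial bookkeeping — matching densities and selecting coset representatives so the pieces merge into one lattice — and the analytic heart of the matter is entirely absorbed into Theorem \ref{thm.AbstractMain}.
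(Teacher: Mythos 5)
Your overall strategy --- realise $Y$ as a cut-and-project set, split the window into parallelotope pieces, and apply Theorem \ref{thm.AbstractMain} to each piece --- matches the paper's, but the final gluing step contains a genuine gap, and it is precisely the step where the paper brings in an additional tool. You propose to arrange the target lattices $L_i$ to be cosets of a common sublattice so that $\bigsqcup_i(L_i+t_i)$ is itself a lattice. This cannot be done: if a lattice $L$ of density $\kappa$ is partitioned into cosets of sublattices $L_i$, then each density $\kappa_i=\kappa/[L:L_i]$ is $\kappa$ divided by an integer, whereas the densities of the pieces $Y_i$ are proportional to the volumes $|W_i|$, whose ratios to $|W|$ are irrational (they involve $\sin(2\pi k/5)$, i.e.\ the golden ratio). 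So no choice of ``compatible'' lattices and coset representatives merges the pieces into a single lattice, and the disjoint union of the cosets that Theorem \ref{thm.AbstractMain} actually hands you is not a lattice. The paper circumvents this by invoking Laczkovich's theorem (Theorem \ref{thm.Lacz}): condition (i) there, a discrepancy bound of boundary order, is manifestly additive over a finite disjoint union, so each $Y_j$ being bounded distance to a lattice implies each $Y_j$ satisfies (i), hence $Y=\bigsqcup_j Y_j$ satisfies (i) with $\kappa=\sum_j\kappa_j$, hence $Y$ is bounded distance to a lattice. Some such input --- or an equivalent lemma that finite disjoint unions of BD-to-lattice sets are BD to a lattice --- is indispensable; it is not combinatorial bookkeeping.

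A secondary problem is your choice of window. A regular pentagon is not centrally symmetric, so it cannot be partitioned into finitely many parallelograms; the rhombus decomposition of the pentagonal slabs that your argument requires does not exist. The paper instead takes $W$ to be a translate of the image under $\rho_i$ of the unit cube in $\R^5$, which decomposes canonically into $\binom{5}{3}=10$ translated projections of three-dimensional coordinate parallelotopes, each of which satisfies the hypotheses of Theorem \ref{thm.AbstractMain} directly; the identification of the resulting $Y$ with a linear image of the Penrose vertex set is then quoted from de Bruijn and Robinson.
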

This result was mentioned in the paper of Duneau and Oguey (see comments on \cite[p.13]{DuneOgue90}), and it has recently been proved using different means by Solomon \cite{Solo2011}. We wish to emphasize that Theorem \ref{thm.BD} is only representative of the potential applications of our main theorem. The same argument used in its proof applies to any generic canonical cut and project set in Euclidean space. However, to avoid a long list of definitions and technical points regarding these objects, we maintain focus on the Penrose tilings, viewed as linear transformations of 5 to 2 cut and project sets.

It is also worth noting that the hypotheses Theorem \ref{thm.AbstractMain} allow us the flexibility to formulate results in many interesting non-Euclidean spaces. This is a topic which we believe will lead to new results, and which we leave open for further exploration.\vspace*{.1in}

\noindent{\em Acknowledgements:} AH wishes to thank Dirk Frettl\"{o}h for helpful conversations in Delft regarding the context of the results in this paper, and for sharing a preprint (joint work between him and Alexey Garber) in which similar results have been obtained.

\section{Statement and proof of main theorem}
Let $X$ be a metrizable $\Q$-vector space with a translation invariant metric $\mathrm{d}:X\times X\rar [0,\infty),$ and suppose that $V_p$ and $V_i$ are complementary $\Q$-vector subspaces of $X$, in the sense that $V_p\cap V_i=\{0\}$ and
\[X=V_p+V_i,\]
where the right hand side denotes the Minkowski sum of the two sets. In all of what follows, subsets of $X$ will be taken with the usual induced topology. Let $\rho_p$ and $\rho_i$ be the projections according to the above decomposition, onto $V_p$ and $V_i$, respectively. Suppose that $\Gamma$ is a subgroup of $X$ which is also a finite dimensional $\Z$-module. Given a compact $W\subseteq V_i$, we define a set $\tilde{Y}\subseteq X$ and a multi-set $Y\subseteq V_p$ by
\[\tilde{Y}=\rho_i^{-1}(W)\cap\Gamma\quad\text{and}\quad Y=\rho_p(\tilde{Y}).\]
We emphasize that $Y$ is a multi-set, so that if the restriction of $\rho_p$ to $\tilde{Y}$ is not injective, the elements of $Y$ are listed with the appropriate multiplicity.
\begin{theorem}\label{thm.AbstractMain}
  With the notation above, suppose further that:
  \begin{enumerate}
    \item[(i)] $Z$ is a subspace of $X$ (possibly different than $V_i$) which is complementary to $V_p$, $\phi_p:X\rar V_p$ is the projection onto $V_p$ with respect to the decomposition $X=V_p+Z$,
    \item[(ii)] The group $\Gamma$ has a decomposition of the form
    \[\Gamma=\Lambda+\Lambda_c,\]
    where $\Lambda$ and $\Lambda_c$ are groups with the properties that $\Lambda\cap\Lambda_c=\{0\}$ and $Z\cap\Gamma=\Lambda$, and
    \item[(iii)] $W$ is the image under $\rho_i$ of a compact fundamental domain for $Z/\Lambda'$, where $\Lambda'\leqslant\Lambda$ and $[\Lambda:\Lambda']=N<\infty.$
  \end{enumerate}
Then there is a bijection $f$ from $Y$ to a subgroup $\Lambda_c'$ of $V_p$ satisfying $\phi_p(\Lambda_c)\leqslant\Lambda_c'$ and $[\Lambda_c':\phi_p(\Lambda_c)]=N,$ with the property that $f$ moves every point of $Y$ by at most a uniformly finite amount.
\end{theorem}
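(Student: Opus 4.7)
Following Duneau--Oguey, the approach is to exploit that $X$ has two complementary decompositions $X=V_p+V_i=V_p+Z$, giving two projections $\rho_p$ and $\phi_p$ onto $V_p$, whose difference satisfies $\rho_p(y)-\phi_p(y)=\phi_z(y)-\rho_i(y)$ (where $\phi_z:=\mathrm{id}-\phi_p$). This difference will be uniformly bounded on $\tilde Y$ because we confine both $\rho_i(y)$ and $\phi_z(y)$ to compact sets. The plan is to build $f$ by replacing $\rho_p(y)$ by $\phi_p(y)$ up to a small, combinatorially determined correction that absorbs the $N$-fold multiplicity.

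The first step is to reformulate $\tilde Y=\phi_z^{-1}(D)\cap\Gamma$, which follows from $W=\rho_i(D)$ and $\ker\rho_i=V_p$; this places $\phi_z(y)\in D$ (compact) in addition to $\rho_i(y)\in W$ (compact). Next, each $y\in\Gamma$ decomposes uniquely as $y=\lambda_y+\lambda_{c,y}$ with $\lambda_y\in\Lambda$ and $\lambda_{c,y}\in\Lambda_c$, and since $\Lambda\subseteq Z$ one has $\phi_z(y)=\lambda_y+\phi_z(\lambda_{c,y})$. Therefore $y\in\tilde Y$ is equivalent to $\lambda_y\in\Lambda\cap(D-\phi_z(\lambda_{c,y}))$; since $D$ is a fundamental domain for $Z/\Lambda'$ and $[\Lambda:\Lambda']=N$, this intersection has exactly $N$ elements, one in each coset of $\Lambda/\Lambda'$. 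This yields a natural bijection $\tilde Y\leftrightarrow\Lambda_c\times(\Lambda/\Lambda')$, and I note that $\phi_p$ restricted to $\Lambda_c$ is injective because $\ker\phi_p\cap\Gamma=Z\cap\Gamma=\Lambda$ and $\Lambda\cap\Lambda_c=\{0\}$.

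Now fix coset representatives $\lambda_1=0,\dots,\lambda_N$ for $\Lambda/\Lambda'$. Because $V_p$ is a $\mathbb{Q}$-vector space and $\phi_p(\Lambda_c)$ is a finitely generated subgroup, one can pick a subgroup $\Lambda_c'\leqslant V_p$ containing $\phi_p(\Lambda_c)$ with index $N$---for instance, by adjoining $w/N$ for a suitably primitive $w\in\phi_p(\Lambda_c)$, and extending in the evident way if $\Lambda/\Lambda'$ is not cyclic. Choose coset representatives $\mu_1=0,\mu_2,\dots,\mu_N$ for $\Lambda_c'/\phi_p(\Lambda_c)$. For each $y\in\tilde Y$, let $i(y)$ be the unique index with $\lambda_y\in\lambda_{i(y)}+\Lambda'$, and define
\[
f(\rho_p(y))=\phi_p(y)+\mu_{i(y)}\in\Lambda_c'.
\]
Bijectivity onto $\Lambda_c'$ is then the composite $\tilde Y\leftrightarrow\Lambda_c\times(\Lambda/\Lambda')\leftrightarrow\phi_p(\Lambda_c)\times\{\mu_1,\dots,\mu_N\}=\Lambda_c'$, and the bounded-movement estimate
\[
\rho_p(y)-f(\rho_p(y))=\bigl(\phi_z(y)-\rho_i(y)\bigr)-\mu_{i(y)}
\]
is uniform because $D,W$ are compact and $\mu_{i(y)}$ takes only finitely many values. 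The main subtlety I anticipate is realizing $\Lambda_c'$ inside the $\mathbb{Q}$-vector space $V_p$ with precisely index $N$ (not $N^r$ for some $r>1$); divisibility of $V_p$ is exactly what makes this possible, so this amounts to a mild bookkeeping step rather than a serious obstacle.
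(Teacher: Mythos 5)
Your proof is correct and follows essentially the same route as the paper's: both decompose $\tilde Y$ into fibers over $\Lambda_c$, count exactly $N$ points per fiber via the fundamental-domain hypothesis, enlarge $\phi_p(\Lambda_c)$ to an index-$N$ subgroup of $V_p$ by dividing a primitive (basis) element by $N$ inside the divisible group $V_p$, and bound the displacement using the compactness of $W$ and of $D=\rho_i^{-1}(W)\cap Z$. The only (harmless) difference is that you index the $N$ points in each fiber canonically by their coset in $\Lambda/\Lambda'$, whereas the paper uses an arbitrary enumeration $y_\lambda(0),\ldots,y_\lambda(N-1)$.
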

\begin{proof}
  From assumption (ii) it follows that $\tilde{Y}$ can be written as the disjoint union
  \[\tilde{Y}=\bigsqcup_{\lambda\in\Lambda_c}(Z+\lambda)\cap\tilde{Y}.\]
  Let $W_Z=\rho_i^{-1}(W)\cap Z$. For $\lambda\in\Lambda_c$ consider the set
  \[W_\lambda=(Z+\lambda)\cap\rho_i^{-1}(W)-\lambda.\]
  Each such set is a translate of $W_Z$ in $Z$, and therefore a fundamental domain for $Z/\Lambda'$. It follows that each set $W_\lambda$ contains exactly $N$ points of $\Lambda,$ and from this we have that
  \[|(Z+\lambda)\cap \tilde{Y}|=N.\]
  For each $\lambda\in\Lambda_c$ let us write
  \[(Z+\lambda)\cap \tilde{Y}=\{y_\lambda(0),\ldots ,y_\lambda(N-1)\}.\]
  Choose a $\Z$-basis $\gamma_1,\ldots ,\gamma_d$ for $\Lambda_c$, and for $1\le j\le d$ let $\gamma_j'=\phi_p (\gamma_j)$. From the assumptions in hypothesis (ii), together with the fact that $X$ is a $\Q$-vector space, it is easy to deduce that the map $\phi_p$ is injective. It follows that the elements $\frac{1}{N}\gamma_1',\gamma_2',\ldots ,\gamma_d'$ generate a group $\Lambda_c'\leqslant V_p$ which contains $\phi_p(\Lambda_c)$ as a subgroup of index $N$.

  Now, using the decomposition
  \[\tilde{Y}=\bigsqcup_{\lambda\in\Lambda_c}\bigsqcup_{j=0}^{N-1}y_\lambda (j),\]
  we define a bijection $g:\tilde{Y}\rar\Lambda_c'$ by
  \[g(y_\lambda(j))=\phi_p(\lambda)+\frac{j}{N}\gamma_1',\]
  and we use this to define $f:Y\rar\Lambda_c'$ by
  \[f(y)=g(\rho_p^{-1}(y)\cap \tilde{Y}).\]
  Recall that $Y$ is a multi-set, so the map $f$ does produce a well defined bijection between $Y$ and $\Lambda_c'$.

  All that remains is to show that $f$ moves every point by at most a uniformly finite amount. Let $y\in Y$ and write $\tilde{y}=\rho_p^{-1}(y)\cap\tilde{Y}$ in two different ways, as
  \[\tilde{y}=v_i+v_p^{(1)},\quad\text{where}~v_i\in V_i~\text{and}~v_p^{(1)}\in V_p,\]
  and
  \[\tilde{y}=z+v_p^{(2)},\quad\text{where}~z\in Z~\text{and}~v_p^{(2)}\in V_p,\]
  so that $y=v_p^{(1)}$ and $f(y)=v_p^{(2)}+(j/N)\gamma_1'$, for some $0\le j<N$. Notice that, because of the definition of $\tilde{Y}$, we have that
  \[v_i\in W\quad\text{and}\quad z\in W_Z.\]
  Therefore we have that
  \begin{align*}
  \mathrm{d}(f(y),y)&\le \mathrm{d}\left(\tilde{y}-z+\frac{j}{N}\gamma_1',\tilde{y}-z\right)+\mathrm{d}(\tilde{y}-v_i,\tilde{y}-z)\\
  &\le \mathrm{d}\left(\frac{j}{N}\gamma_1',0\right)+\sup_{v\in W}\sup_{z'\in W_Z}\mathrm{d}(-v,-z').
  \end{align*}
  Since $W\cup W_Z$ is a compact subset of $X,$ the right hand side of this inequality is finite. Since it does not depend on $y$, the conclusion follows.
\end{proof}

\section{Proof of Theorem \ref{thm.BRSs}}
 For the proof of Theorem \ref{thm.BRSs} we will apply Theorem \ref{thm.AbstractMain} with $X=\R^{s+1},~\Gamma=\Z^{s+1}$,
  \[V_p=\langle(\alpha,1)\rangle_\R,\quad\text{and}\quad V_i=\langle e_1,\ldots ,e_s\rangle_\R,\]
  where $e_j$ denotes the $j$-th standard basis vector for $\R^{s+1}$. Write each of the vectors $v_j$ from the statement of Theorem \ref{thm.BRSs} as
  \[v_j=n^{(j)}-n^{(j)}_{s+1}\alpha,\]
  with $n^{(j)}_{s+1}\in\Z$ and $n^{(j)}\in\Z^s$, and let $W=P-x$ where $x\in V_i$ and $P$ is the  parallelotope of \eqref{eqn.Parallelotope}, realized as a subset of $V_i$. If we set $\lambda_j=(n^{(j)},n^{(j)}_{s+1})\in\Gamma$ then for each $j$ we have that
  \[v_j=\rho_i(\lambda_j).\]
  We take $Z$ to be the real subspace of $X$ generated by the vectors $\lambda_1,\ldots ,\lambda_s$, and we set $\Lambda=Z\cap \Gamma$. It is a straightforward exercise to show that we can find a vector $\lambda_c\notin Z$ for which
  \[\Gamma=\Lambda+\Lambda_c,\quad\text{with}\quad \Lambda_c=\langle\lambda_c\rangle_\Z.\]
  Therefore the hypotheses of Theorem \ref{thm.AbstractMain} are satisfied, allowing us to conclude that the corresponding multi-set $Y$ is in bijection, via a bounded distance map, with a lattice in $V_p$.

Now let $Y'$ be the multi-set defined by
  \[Y'=\{n\in\Z:n\alpha\in P-x~\mathrm{mod}~\Z^s\}\subseteq\R,\]
where each integer $n\in Y'$ is counted with multiplicity equal to the number of integer vectors $m\in\Z^s$ for which $n\alpha+m\in P-x$. Applying a linear transformation to $X$ in our above argument, we deduce that $Y'$ is in bounded distance bijection with a set of the form $\gamma\Z\subseteq\R$, for some $\gamma>0$. Since $1,\alpha_1,\ldots ,$ and $\alpha_s$ are $\Q$-linearly independent, the Birkhoff Ergodic Theorem (alternatively, Weyl's criterion from uniform distribution theory) implies that $\gamma=|P|^{-1}$. Write
  \[Y'=\{n_j\}_{j\in\Z},\]
  with $n_0\le 0< n_1$ and $n_j\le n_{j+1}$ for all $j$. Given $N\in\N$, choose $K\ge 0$ so that $n_K\le N<n_{K+1}$. Then we have
\[\sum_{n=1}^N\chi_P(n\alpha+x)=K,\]
and also that
\[K=|P|n_K+O(1)=|P|N+O(1).\]
After rearranging terms and taking the supremum over all $x$, this shows that $P$ is a bounded remainder set.

\section{Proof of Theorem \ref{thm.BD}}
The argument we will use in our proof of Theorem \ref{thm.BD} closely follows that outlined in \cite[p.13]{DuneOgue90}, the only difference being that we do not appeal to \cite[Theorem 4.1]{DuneOgue90}, but instead use the following result due to Laczkovich (see \cite[Theorem 1.1]{Lacz1992}).
\begin{theorem}\label{thm.Lacz}
For any discrete set $S\subseteq \R^d$ and for any $\kappa>0$ the following conditions are equivalent:
\begin{enumerate}
  \item[(i)] There is a positive constant $C>0$ with the property that, for any set $\mc{C}\subseteq\R^d$ which is a finite union of unit cubes, we have that
      \[\left|\#(S\cap\mc{C})-\kappa|\mc{C}|_d\right|\le C\cdot |\partial \mc{C}|_{d-1},\]
      where $|\cdot|_k$ denotes $k$-dimensional Lebesgue measure.
  \item[(ii)] There is a bijection from $S$ to $\alpha^{-1/d}\Z^d$, which moves every point by at most a uniformly finite amount.
\end{enumerate}
\end{theorem}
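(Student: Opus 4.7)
The plan is to handle the two implications separately. Direction (ii) $\Rightarrow$ (i) is a straightforward counting argument that I would dispatch first. The substantive content lies in (i) $\Rightarrow$ (ii), which I would prove by invoking a Hall-type marriage theorem for infinite, locally finite bipartite graphs: the desired bounded-displacement bijection exists precisely when a suitable bipartite graph on $S\sqcup\kappa^{-1/d}\Z^d$ admits a perfect matching, and hypothesis (i), applied both to $S$ and (trivially) to the lattice, is exactly tailored to verify Hall's condition.

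For (ii) $\Rightarrow$ (i): suppose $f:S\to L:=\kappa^{-1/d}\Z^d$ is a bijection with $\sup_{s}\|f(s)-s\|\le M$. Given a finite union of unit cubes $\mc{C}$, the image $f(S\cap\mc{C})$ lies in the $M$-neighborhood of $\mc{C}$, while every $\ell\in L$ in the $M$-interior of $\mc{C}$ is an image of an element of $S\cap\mc{C}$. Hence $|\#(S\cap\mc{C})-\#(L\cap\mc{C})|$ is bounded by the number of lattice points in a tube of width $2M$ around $\partial\mc{C}$, which is $O_M(|\partial\mc{C}|_{d-1})$. Combining this with the elementary lattice point estimate $|\#(L\cap\mc{C})-\kappa|\mc{C}|_d|=O(|\partial\mc{C}|_{d-1})$ gives (i).

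For (i) $\Rightarrow$ (ii): fix a large constant $R$ to be chosen below, and form the bipartite graph $G$ on $S\sqcup L$ with an edge from $s$ to $\ell$ iff $\|s-\ell\|\le R$. A perfect matching in $G$ is exactly a bijection $S\to L$ moving every point by at most $R$. Since $G$ is locally finite, a standard compactness argument (the K\"onig/Rado extension of Hall's theorem to countable bipartite graphs) reduces matters to checking Hall's condition on both sides. For a finite $A\subseteq S$, I would let $\mc{C}_A$ be the union of unit cubes meeting $A$ and let $\mc{C}_A^{R}$ be the union of unit cubes within distance $R$ of $A$; for $R$ large enough, $N_G(A)\supseteq L\cap\mc{C}_A^{R}$. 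Hypothesis (i) gives $|A|\le\#(S\cap\mc{C}_A)\le\kappa|\mc{C}_A|_d+C|\partial\mc{C}_A|_{d-1}$, while (i) applied to $L$ gives $\#(L\cap\mc{C}_A^{R})\ge\kappa|\mc{C}_A^{R}|_d-C'|\partial\mc{C}_A^{R}|_{d-1}$, and the shell $\mc{C}_A^{R}\setminus\mc{C}_A$ has $d$-volume at least of order $R\,|\partial\mc{C}_A|_{d-1}$. Choosing $R$ large should force $|N_G(A)|\ge|A|$, and the symmetric check for finite $B\subseteq L$ proceeds identically with the roles of $S$ and $L$ reversed.

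The main obstacle is the quantitative balancing in the last step: to make the shell volume dominate both error terms simultaneously, one needs a uniform comparison of the form $|\partial\mc{C}_A^{R}|_{d-1}\le K_R\,|\partial\mc{C}_A|_{d-1}$, with $K_R$ depending only on $R$ and $d$, so that the linear-in-$R$ gain from the shell beats the boundary error at a scale independent of $A$. This is an isoperimetric-type estimate for $R$-thickenings of cube complexes in $\R^d$, and getting it with a constant that is manageable relative to $\kappa$, $C$, and $C'$ is the geometric heart of the argument; once it is in hand, choosing a single $R$ that enforces both Hall conditions is a routine combinatorial step.
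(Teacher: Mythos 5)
The paper does not actually prove this statement: it is imported verbatim (modulo a typo --- the $\alpha^{-1/d}$ in (ii) should read $\kappa^{-1/d}$, as you silently correct) from Laczkovich \cite[Theorem 1.1]{Lacz1992}, so there is no in-paper proof to compare against. Judged on its own terms, your proposal handles (ii) $\Rightarrow$ (i) correctly, and your strategy for (i) $\Rightarrow$ (ii) --- a perfect matching in the locally finite bipartite graph on $S\sqcup \kappa^{-1/d}\Z^d$ with edges of length at most $R$, via the Hall/Rado theorem and a Schr\"oder--Bernstein step --- is indeed the skeleton of Laczkovich's argument. The gap lies in your verification of the marriage condition.

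The estimate you lean on, that the shell $\mc{C}_A^{R}\setminus\mc{C}_A$ has volume at least of order $R\,|\partial\mc{C}_A|_{d-1}$, is false for general finite unions of unit cubes. Take $\mc{C}_A$ to be $n$ parallel slabs $[2i,2i+1]\times[0,T]^{d-1}$, $1\le i\le n$, with $T$ huge: then $|\partial\mc{C}_A|_{d-1}\approx 2nT^{d-1}$, but for $R\ge 2$ the thickening is essentially the solid box, so the shell has volume only about $(n+2R)T^{d-1}$, far below $2RnT^{d-1}$ once $n$ and $R$ are large. Your chain of inequalities would then require $\kappa(n+2R)\ge 2Cn+O(1)$, which fails for large $n$ whenever the constant $C$ furnished by hypothesis (i) exceeds $\kappa/2$ --- and nothing in (i) prevents that. (The companion bound $|\partial\mc{C}_A^{R}|_{d-1}\le K_R|\partial\mc{C}_A|_{d-1}$ is also problematic: even granting it with $K_R\sim R^{d-1}$, a linear-in-$R$ volume gain cannot dominate $C'K_R$ when $d\ge 3$.) The missing idea, which is the real content of Laczkovich's proof, is to interpolate: one forms the chain $\mc{C}_A=D_0\subseteq D_1\subseteq\cdots\subseteq D_k$ of successive unit-cube neighbourhoods and uses an averaging/pigeonhole argument (an isoperimetric-type lemma for cube complexes, exploiting that each step's volume gain controls the previous boundary) to locate an intermediate $D_j$ whose boundary measure is small relative to the volume already accumulated; hypothesis (i) is then applied to that $D_j$ rather than to $\mc{C}_A$ and $\mc{C}_A^{R}$ directly. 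Without such a lemma the Hall condition cannot be verified by the volume count you describe, so as written the proof of (i) $\Rightarrow$ (ii) does not go through.
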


Let $\zeta=\exp(2\pi i/5)$, take $X=\R^5$, and let $V_p$ be the two dimensional real subspace of $X$ generated by the vectors
\[(1,\mathrm{Re}(\zeta),\mathrm{Re}(\zeta^2),\mathrm{Re}(\zeta^3),\mathrm{Re}(\zeta^4))\]
and
\[(0,\mathrm{Im}(\zeta),\mathrm{Im}(\zeta^2),\mathrm{Im}(\zeta^3),\mathrm{Im}(\zeta^4)).\]
Take $V_i$ to be the real subspace of $X$ orthogonal to $V_p$, and let $W\subseteq V_i$ be a translate of the image under $\rho_i$ of the unit cube in $\R^5$. Well known results of de Bruijn \cite{Brui1981} and Robinson \cite{Robi1996} show that the set $Y$ obtained in this way is the image under a linear transformation of the collection of vertices of a Penrose tiling, and in fact that all Penrose tilings can be obtained in a similar way.

The set $W$ can be written as a disjoint union of $\binom{5}{3}=10$ parallelotopes $W_1,\ldots ,W_{10},$ each of which is a translate of the projection to $V_i$ of a parallelotope in $\R^5$ generated by $3$ elementary basis vectors. Each parallelotope $W_j$ satisfies the hypotheses of Theorem \ref{thm.AbstractMain}, and therefore gives rise to a set $Y_j$ which is in bijection, via a bounded distance map, with a lattice.

Appealing to Theorem \ref{thm.Lacz}, for each $j$ and for any region $\mc{C}\in V_p$ which is a finite union of unit cubes (chosen with respect to some basis for $V_p$), the number of points in $Y_j\cap\mc{C}$ is equal to the expected number plus an error which is at most a constant multiple of the volume of the boundary. Therefore, by summing over all $j$, we see that the set $Y$ also satisfies condition (i) of Theorem \ref{thm.Lacz}, and the conclusion of Theorem \ref{thm.BD} follows immediately.

\vspace{.1in}

\vspace{.1in}

{\footnotesize
\noindent AH, HK\,:\\
Department of Mathematics, University of York,\\
Heslington, York, YO10 5DD, England\\
e-mails: alan.haynes@york.ac.uk, henna.koivusalo@york.ac.uk

\vspace*{.1in}

\noindent MK\,:\\
Department of Mathematics, University of Michigan\\
Ann Arbor, MI, 48109, USA\\
e-mails: michaesk@umich.edu

}

\end{document}